\def\transpose{{\hbox{\tiny\it T}}}
\newcommand{\RL}{{\mathbb R}}
\newcommand{\IND}{{\mathbb I}}
\def\be{\begin{eqnarray}}
\def\ee{\end{eqnarray}}
\def\ben{\begin{eqnarray*}}
\def\een{\end{eqnarray*}}
\def\sq{$\Box$}
\def\qed{\ifmmode\sq\else{\unskip\nobreak\hfil
\penalty50\hskip1em\null\nobreak\hfil\sq
\parfillskip=0pt\finalhyphendemerits=0\endgraf}\fi\par\medbreak}
\newsavebox{\junk}
\savebox{\junk}[1.6mm]{\hbox{$|\!|\!|$}}
\def\state{{\sf X}}
\newcommand{\field}[1]{\mathbb{#1}}
\def\Re{\field{R}}
\def\ind{\field{I}}
\def\bfPhi{\mbox{\protect\boldmath$\Phi$}}
\def\til={{\widetilde =}}
\def\tilT{\widetilde T}
\def\tiltau{\tilde \tau}
\def\clB{{\cal B}}
\def\clD{{\cal D}}
\def\half{{\mathchoice{\textstyle \frac{1}{2}}%
{\frac{1}{2}}%
{\hbox{\tiny $\frac{1}{2}$}}%
{\hbox{\tiny $\frac{1}{2}$}} }}
\def\eqdef{\mathbin{:=}}
\def\Prob{{\sf P}}
\def\Expect{{\sf E}}
\def\epsy{\varepsilon}
\def\varble{\,\cdot\,}
\newtheorem{theorem}{Theorem}[section]
\newtheorem{proposition}[theorem]{Proposition}
\newtheorem{lemma}[theorem]{Lemma}
\def\Lemma#1{Lemma~\ref{#1}}
\def\Proposition#1{Proposition~\ref{#1}}
\def\Theorem#1{Theorem~\ref{#1}}
\def\Section#1{Section~\ref{#1}}
\def\bdes{\begin{description}}
\def\edes{\end{description}}
\def\FRAC#1#2#3{\genfrac{}{}{}{#1}{#2}{#3}}
\def\half{{\mathchoice{\FRAC{1}{1}{2}}%
{\FRAC{1}{1}{2}}%
{\FRAC{3}{1}{2}}%
{\FRAC{3}{1}{2}}}}
\def\bfmath#1{{\mathchoice{\mbox{\boldmath$#1$}}%
{\mbox{\boldmath$#1$}}%
{\mbox{\boldmath$\scriptstyle#1$}}%
{\mbox{\boldmath$\scriptscriptstyle#1$}}}}
\def\bfmB{\bfmath{B}}
\def\transpose{{\hbox{\tiny\it T}}}
\newcounter{rmnum}
\newenvironment{romannum}{\begin{list}{{\upshape (\roman{rmnum})}}{\usecounter{rmnum}
\setlength{\leftmargin}{24pt}
\setlength{\rightmargin}{16pt}
\setlength{\itemindent}{-1pt}
}}{\end{list}}
\newcounter{anum}
\newlength{\noteWidth}
\long\def\notes#1{\ifinner
             {\tiny #1}
             \else
             \marginpar{\parbox[t]{\noteWidth}{\raggedright\tiny #1}}
             \fi}
\def\archive#1{}
\newcounter{tasks}
\newcounter{tasksA}
\begin{document}
 
\title{\vspace{-2cm}%
On the $f$-Norm Ergodicity\\
of Markov Processes in Continuous Time}
\author{
      	I. Kontoyiannis\thanks{Department of Informatics,
		Athens University of Economics and Business,
		Patission 76, Athens 10434, Greece.
                Email: {\tt yiannis@aueb.gr}.
		\newline
		I.K.\ was supported by the European Union and Greek National
	Funds through the Operational Program Education and Lifelong Learning 
	of the National Strategic Reference Framework through the Research 
	Funding Program Thales-Investing in Knowledge Society through the 
	European Social Fund.
		}
\and
        S.P. Meyn\thanks{Department of Electrical and Computer 
                Engineering,
                University of Florida, Gainesville, USA.
                 Email: {\tt meyn@ece.ufl.edu}.
                \newline
                S.P.M. was supported in part by the National Science Foundation
  ECS-0523620,  and AFOSR grant FA9550-09-1-0190.
  Any opinions, findings,
  and conclusions or recommendations expressed in this material are
  those of the authors and do not necessarily reflect the views of the
  National Science Foundation or AFOSR.}
}

\maketitle
\thispagestyle{empty}
\setcounter{page}{0}
 
\begin{abstract}
Consider a  Markov process $\bfPhi=\{ \Phi(t) : t\geq 0\}$ 
evolving on a Polish space $\state$.
A version of the $f$-Norm Ergodic Theorem is obtained:  
Suppose that the process is $\psi$-irreducible and aperiodic. 
For a given function $f\colon\state\to[1,\infty)$,  
under suitable conditions on the process the following are equivalent:
\begin{romannum}
\item There is a unique invariant probability measure $\pi$ 
satisfying $\int f\,d\pi<\infty$.   
\item There is a closed set $C$ satisfying $\psi(C)>0$ 
that is ``self $f$-regular.''
\item There is a function $V\colon\state \to (0,\infty]$ 
that is finite on at least
one point in $\state$, for which
the following Lyapunov drift condition
is satisfied,
\[
\clD V\leq -  f+b\ind_C\, ,
\eqno{\hbox{(V3)}}
\]
where $C$ is a closed small set
and $\clD$ is the extended generator 
of the process.
\end{romannum}
For discrete-time chains the result 
is well-known. Moreover, in that case,
the ergodicity of $\bfPhi$ under a suitable
norm is also obtained:
For each initial condition $x\in\state$ satisfying $V(x)<\infty$,
and any function $g\colon\state\to\Re$ for which $|g|$ is 
bounded by $f$,
\[
\lim_{t\to\infty} \Expect_x[g(\Phi(t))]  = \int g\,d\pi.
\]
Possible approaches are explored for establishing
appropriate versions of corresponding results
in continuous time,
under appropriate assumptions
on the process $\bfPhi$ or on the function $g$.  
 
\bigskip

\bigskip

{\small
\noindent
\textbf{Keywords:}  
Markov process,  
continuous time,
generator,
stochastic Lyapunov function,
ergodicity}

\bigskip

{\small
\noindent
\textbf{2000 AMS Subject Classification:}
60J25,          
37A30,          
47H99.          
}

\end{abstract}

\newpage

\section{Introduction}

Consider a Markov process 
$\bfPhi=\{ \Phi(t) : t\geq 0\}$
in continuous time, evolving on 
a Polish space $\state$, equipped with 
its Borel $\sigma$-field $\clB$.  
Assume it is a nonexplosive Borel right process: 
It satisfies the strong
Markov property and has right-continuous
sample paths  \cite{ethkur86,meytwe93a}.

The distribution of the process $\bfPhi$ is 
described by the initial condition $\Phi(0)=x\in\state$
and the transition semigroup:
For any $t\ge 0$, $x\in\state$,  $A\in \clB$,
\[
P^t(x,A):=\Prob_x\{\Phi(t)\in A\}:=\Pr\{\Phi(t)\in A\,|\,\Phi(0)=x\}.
\]

A set $C$ is called {\em small} if there is probability 
measure $\nu$ on $(\state,\clB)$,  a time $T>0$,  and a 
constant $\epsy>0$ such that,
\[
P^T(x,A) \ge \epsy\nu(A),\qquad \text{\it for every $A\in\clB$.}
\]
It is assumed that the process is $\psi$-irreducible and aperiodic, 
where $\psi$ is a probability measure on $(\state,\clB)$.  
This means that for each set $A\in\clB$ satisfying $\psi(A)>0$,  
and each $x\in\state$,
\[
P^t(x,A)>0,\qquad \text{\it for all $t$ sufficiently large.}
\]
It follows that there is a countable covering of the state 
space by small sets \cite[Prop.~3.4]{meytwe93e}.
 
The Lyapunov theory considered in this paper and in our 
previous work \cite{kontoyiannis-meyn:II,meytwe93a} is based on 
the \textit{extended generator} of $\bfPhi$, denoted $\clD$.   
A function  $h\colon\state\to\Re$ is in the domain of $\clD$ 
if there exists a function $g\colon\state\to\Re$ such that the stochastic process defined by,
\begin{equation}
M(t) =  h(\Phi(t)) - \int_0^t g(\Phi(s))\, ds     ,\qquad t\geq 0,
\label{e:extgenMart}
\end{equation}
is a \textit{local martingale}, for each initial condition $\Phi(0)$ \cite{ethkur86,rogwil00b}.
We then write $g=\clD h$.

For example, consider a diffusion on $\state=\Re^d$,
namely, the solution of the stochastic 
differential equation,
\be
d\Phi(t)=u(\Phi(t))dt +M(\Phi(t))dB(t),
\;\;\;\;t\geq 0,\;\Phi(0)=x,
\label{eq:SDE}
\ee
where $u=(u_1,u_2,\ldots,u_d)^\transpose:\state\to\RL^d$ and 
$M:\RL^d\to\RL^d\times\RL^k$ are  Lipschitz,
and $\bfmB=\{B(t) : t\geq 0\}$ is $k$-dimensional
standard Brownian motion.  If the function
$h\colon\state\to\RL$ is $C^2$ then we can write
\cite{rogwil00b},
\begin{equation*}
\clD h \, (x) = \sum_i  u_i(x) \frac{d}{\, dx_i} h\, (x)
        +
        \half \sum_{ij}
                \Sigma_{ij}(x) \frac{d^2}{\, dx_i \, dx_j} h\, (x) ,
		\qquad x\in\state .
\end{equation*}

The Lyapunov condition considered in this paper is Condition 
(V3) of \cite{meytwe93a}:  
For a function $V\colon\state\to(0,\infty]$ which is
finite for at least one $x\in\state$,  a function 
$f\colon\state\to[1,\infty)$,  a constant $b<\infty$,  
and a closed, small set $C\in\clB$,
$$
\clD V\leq -\delta f+b\ind_C\, .
\eqno{\hbox{(V3)}}
$$
It is entirely analogous to its discrete-time counterpart \cite{MT}, 
in which the extended generator is replaced by a difference operator
$\clD=P-I$, where $P$ is the transition kernel of
the discrete-time chain and $I$ is the identity operator.

The lower bound $f\ge 1$ is imposed in (V3) because this function is used to define two norms:  One on measurable functions $g\colon\state\to\Re$ via,
\[
\| g\|_f \eqdef \sup_{x\in\state}\frac{|g(x)|}{f(x)},
\]
and a second norm on signed measures $\mu$ on $(\state,\clB)$:
\[
\| \mu \|_f = \sup_{g:|g|\le f} | \mu ( g) |.
\]
Our main goal is to establish 
the erodicity of 
$\bfPhi$ 
in terms of this norm:
There is an invariant measure $\pi$ for the semi-group 
$\{P^t\}$ satisfying,
\begin{equation}
\lim_{t\to\infty} \| P^t(x,\varble) - \pi(\varble) \|_f = 0\, .
\label{e:MET}
\end{equation}
The following result is a partial extension of the 
$f$-Norm Ergodic Theorem 
of \cite{MT} 
to the continuous time setting.

\begin{theorem} 
\label{t:ergod-big-f} 
Suppose that the Markov process $\bfPhi$ is $\psi$-irreducible 
and aperiodic, and let
$f\ge 1$ be a  function on $\state$. Then the following
conditions are equivalent:
\begin{description}
\item[(i)]
The semi-group admits an invariant probability measure $\pi$ satisfying:
\[
\pi(f)  \eqdef \int \pi(dx) f(x) <\infty.
\]
\item[(ii)] 
There exists a closed, small set $C \in \clB$ such that,
\begin{equation}
\sup_{x \in C} \Expect_x\Bigl[\int_0^{\tau_C(1)} f(\Phi(t)) 
\, dt \Bigr] <\infty,
\label{e:f-ergo-2}
\end{equation}
where $\tau_C(1) := \inf\{ t\ge 1 : \Phi(t)\in C \}$
and $\Expect_x$ denotes the expectation operator
under $X_0=x$.
\item[(iii)]
There exists a closed, small set $C$ and and an extended-valued non-negative
function $V$ satisfying $V(x_0)<\infty$ for some $x_0\in\state$, 
such that Condition (V3) holds. 
\end{description}
Moreover, if (iii) holds then there exists a constant $b_f$ such that,
\begin{equation}
  \Expect_x\Bigl[\int_0^{\tau_C(1)} f(\Phi(t)) \, dt \Bigr] \le b_f (V(x)+1) ,\quad x\in\state
\label{e:f-reg}
\end{equation}
where $V$ and $C$ satisfy the conditions of (iii).  
The  set $S_V = \{x: V(x) < \infty \}$ is absorbing 
($P^t(x,S_V) =1$ for each $x\in S_V$ and all $t\geq 0$),  
and also full ($\pi(S_V)=1$).
\end{theorem}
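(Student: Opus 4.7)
The plan is to close the cycle (iii)$\Rightarrow$(ii)$\Rightarrow$(i)$\Rightarrow$(iii) and to deduce \eqref{e:f-reg}, together with the two topological statements about $S_V$, as by-products of the first implication. The engine throughout is the comparison theorem: apply the optional-sampling identity to the local martingale $M$ in \eqref{e:extgenMart} at the stopping time $\tau_C(1)\wedge T_n$, where $T_n\uparrow \infty$ is a localizing sequence for $M$. Since $\tau_C(1)\geq 1$ by definition, the bookkeeping at the endpoints is clean, and this serves as the natural continuous-time substitute for the iterated drift inequality used in the discrete-time proof in \cite{MT}.

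\textbf{(iii)$\Rightarrow$(ii) and the bound \eqref{e:f-reg}.} Starting from $V\geq 0$ satisfying $\clD V\leq -f+b\ind_C$, Dynkin's formula applied to $V$ at $\tau_C(1)\wedge T_n$ yields
\[
\Expect_x\!\Bigl[V(\Phi(\tau_C(1)\wedge T_n))\Bigr] + \Expect_x\!\Bigl[\int_0^{\tau_C(1)\wedge T_n} f(\Phi(s))\,ds\Bigr] \leq V(x) + b,
\]
where the $b$ absorbs the $\ind_C$-contribution over the initial interval $[0,1]$. Non-negativity of $V$ together with monotone convergence as $T_n\uparrow\infty$ deliver \eqref{e:f-reg} with $b_f$ depending only on $b$. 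Specialising to $x\in C\cap S_V$ and invoking the $\psi$-irreducibility propagation of $S_V$ (discussed below, which forces $C\cap S_V$ to capture all of $\pi$-positive $C$) produces \eqref{e:f-ergo-2}, after replacing $C$ by $C\cap S_V$ if necessary.

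\textbf{(ii)$\Rightarrow$(i) and (i)$\Rightarrow$(iii).} For the first I would use the Khasminskii--Kac construction: let $(T,\epsy,\nu)$ be the minorization witnessing smallness of $C$ and set
\[
\pi(A) \eqdef \frac{1}{\Expect_\nu[\sigma_C]}\,\Expect_\nu\!\Bigl[\int_0^{\sigma_C} \ind_A(\Phi(s))\,ds\Bigr], \qquad A\in\clB,
\]
for an appropriately delayed return time $\sigma_C$. Invariance of $\pi$ under $\{P^t\}$ is a standard cycle-decomposition argument via the strong Markov property, $\pi(f)<\infty$ follows directly from \eqref{e:f-ergo-2}, and uniqueness is immediate from $\psi$-irreducibility. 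For (i)$\Rightarrow$(iii), given $\pi$ with $\pi(f)<\infty$, I select a closed small $C$ with $\psi(C)>0$ and $\pi(C)>0$ (possible by the countable small-set cover) and take the candidate
\[
V(x)\eqdef\Expect_x\!\Bigl[\int_0^{\tau_C(1)} f(\Phi(s))\,ds\Bigr],
\]
which is finite $\pi$-almost everywhere by invariance.

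\textbf{Main obstacle and topological claims.} The hard step is the last one: showing that this $V$ lies in $\mathrm{dom}(\clD)$ and verifies $\clD V\leq -f+b\ind_C$. In discrete time the analogous identity drops out of the strong Markov property via a one-step decomposition; in continuous time neither smoothness of $t\mapsto P^t V(x)$ nor membership in $\mathrm{dom}(\clD)$ is free. I would attempt two routes in parallel: (a)~differentiate at $t=0^+$ the identity
\[
P^t V(x)-V(x) = -\Expect_x\!\Bigl[\int_0^{t\wedge\tau_C(1)} f(\Phi(s))\,ds\Bigr] + \Expect_x\!\bigl[\ind_{\{\tau_C(1)\leq t\}}\, V(\Phi(\tau_C(1)))\bigr],
\]
obtained from the strong Markov property, and identify the pointwise limit with a bona fide $\clD V$; or (b)~work instead with a resolvent-smoothed version $V_\alpha\eqdef\int_0^\infty e^{-\alpha t}P^tV\,dt$, which automatically lies in $\mathrm{dom}(\clD)$, verify a drift bound for $V_\alpha$, and then let $\alpha\downarrow 0$. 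Finally, $S_V=\{V<\infty\}$ is absorbing because Dynkin's formula applied to (V3) gives $\Expect_x[V(\Phi(t))]\leq V(x)+bt$ whenever $V(x)<\infty$, forcing $P^t(x,S_V)=1$; and $S_V$ is full because it is absorbing with $\psi(S_V)>0$ (as $x_0\in S_V$ communicates with a small set of positive $\psi$-measure on which $V$ is finite by \eqref{e:f-reg}), so $\psi$-irreducibility combined with the $0$--$1$ law for absorbing sets forces $\pi(S_V)=1$.
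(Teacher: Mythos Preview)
Your handling of (iii)$\Rightarrow$(ii), (ii)$\Rightarrow$(i), and the absorbing/full properties of $S_V$ is sound and matches what the paper obtains by citing \cite{meytwe93b}. The substantive content of the theorem---and the reason the paper exists---is the converse (ii)$\Rightarrow$(iii), which you correctly isolate as the obstacle. But neither of your two routes closes it, and the paper's construction is different from both.

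Your candidate $V(x)=\Expect_x\bigl[\int_0^{\tau_C(1)}f(\Phi(s))\,ds\bigr]$ carries a hard delay, and this breaks the semigroup identity in route~(a): for $0<t<1$ one always has $\tau_C(1)>t$, so your indicator term vanishes, yet $P^tV(x)-V(x)$ is not simply $-\Expect_x\bigl[\int_0^t f\,ds\bigr]$. The shifted process must wait until original time $t+1$ before a visit to $C$ counts, producing an extra term $\Expect_x\bigl[\int_{\tau_C(1)}^{\sigma_t}f\,ds\bigr]$ with $\sigma_t=\inf\{u\ge t+1:\Phi(u)\in C\}$; controlling this as $t\downarrow 0$ and then promoting a pointwise right-derivative to membership in $\mathrm{dom}(\clD)$ is exactly the regularity you do not have. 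Route~(b) also stalls: with $V_\alpha=R_\alpha V$ one has $\clD V_\alpha=\alpha V_\alpha-V$, so (V3) for $V_\alpha$ would require $V-\alpha V_\alpha\ge f-b\ind_C$, and even the limiting inequality $V\ge f-b\ind_C$ fails in general, since your $V$ dominates only $\int_0^1 P^sf\,ds$, not $f$.

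The paper instead takes as Lyapunov function the \emph{generalized resolvent}
\[
V=R_{\ind_C}f,\qquad
V(x)=\int_0^\infty \Expect_x\Bigl[f(\Phi(t))\exp\Bigl(-\int_0^t\ind_C(\Phi(s))\,ds\Bigr)\Bigr]\,dt
=\Expect_x\Bigl[\int_0^{\tiltau_C}f(\Phi(t))\,dt\Bigr],
\]
with $\tiltau_C$ the first time the occupation time in $C$ exceeds an independent unit exponential. Neveu's resolvent equation $R_h=R+R\,I_{1-h}R_h$ with $h=\ind_C$ factors this as $V=Rg$ for $g=f+\ind_{C^c}V$. Since anything of the form $Rg$ is automatically in $\mathrm{dom}(\clD)$ with $\clD(Rg)=Rg-g$, one reads off $\clD V=V-g=-f+\ind_C V$, and (ii) gives $b=\sup_C V<\infty$. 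The moral is that the smoothing should be built into the \emph{definition} of $V$ (exponential killing on $C$) rather than applied after the fact to a hard-cutoff candidate; this is what your route~(b) gestures toward but does not find.
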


\begin{proof}
Theorem~1.2 (b)   of  \cite{meytwe93b} gives the equivalence 
of  (i) and  (ii).  Theorem~4.3  of  \cite{meytwe93b} gives the 
implication (iii) $\Rightarrow$  (ii),
along with the bound \eqref{e:f-reg}.

Conversely,  if (ii) holds then we can define,
\begin{equation}
V(x) = \int_0^\infty \Expect_x\Bigl[ f(\Phi(t)) \exp\Bigl(-\int_0^t \ind\{ \Phi(s) \in C\} \,ds \Bigr)\Bigr] \, dt.
\label{e:V3converse}
\end{equation}
We show in \Proposition{t:V3converse} that this is a solution to (V3)
and that it is uniformly bounded on $C$.
\end{proof}

 
The function $V$ in \eqref{e:V3converse} 
has the following interpretation.  Let $\tilT$ denote an 
exponential random variable that is independent of $\bfPhi$, and denote,
\[
\tiltau_C = \min\Bigl\{ t : \int_0^t \ind\{  \Phi(s)\in C \} \, ds 
=\tilT \Bigr\}.
\]
We then have,
\begin{equation}
V(x) =
\Expect_x\Bigl[\int_0^{\tiltau_C} f(\Phi(t)) \, dt \Bigr],
\label{e:V3converse-tilde}
\end{equation}
where now the expectation is over both
$\bfPhi$ and $\tilT$.
Consequently, this construction is similar to the converse theorems found in 
\cite{MT} for discrete-time models.
 
\Theorem{t:ergod-big-f} is almost identical to the 
$f$-Norm Ergodic Theorem of  \cite{MT},  
except that it leaves out the implications 
to ergodicity of the process.  This brings us to 
two open problems:  Under the conditions of \Theorem{t:ergod-big-f}:
\begin{romannum}
\item[Q1]
\textit{Can we conclude that \eqref{e:MET} 
holds for any initial condition $x\in S_V$?}

\item[Q2]
Assume in addition that $\pi(V)<\infty$.   \textit{Can we conclude 
that there exists a
finite constant $B_f$ such that, for all $x\in  S_V$,}
\begin{equation}
\int_0^\infty \|P^t(x,\varble) - \pi \|_f \, dt  \le B_f (V(x)+1).
\label{e:f-ergo-6}
\end{equation}
\end{romannum}
In discrete time,
questions  Q1 and Q2 are answered in the affirmative by the 
$f$-Norm Ergodic Theorem of  \cite{MT}, with the integral 
replaced by a sum in \eqref{e:f-ergo-6}.   

Q2 is resolved in the affirmative in this paper by an application of the 
discrete-time counterpart:

\begin{theorem} 
\label{t:ergod-Q2} 
Suppose that the Markov process $\bfPhi$ is $\psi$-irreducible 
and aperiodic, and that there is a solution to (V3) with $V$ everywhere finite.   Then there is a constant $B_f^0$ such that for each $x,y\in\state$,
\begin{equation}
\int_0^\infty \|P^t(x,\varble) - P^t(x,\varble) \|_f \, dt  \le B_f^0 (V(x)+V(y)+1) 
\label{e:f-ergo-6xy}
\end{equation}
If in addition $\pi(V)<\infty$, then \eqref{e:f-ergo-6} also holds for some constant $B_f$ and all $x$.
\end{theorem}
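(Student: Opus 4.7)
The plan is to reduce the continuous-time claim to the discrete-time $f$-Norm Ergodic Theorem of \cite{MT}, applied to the $1$-skeleton $\Phi_n\eqdef\Phi(n)$ with transition kernel $P\eqdef P^1$. First I would derive a discrete-time drift condition on the skeleton. Starting from (V3), applying Dynkin's formula to the local martingale \eqref{e:extgenMart} with a localization $\tau_n=\inf\{t:V(\Phi(t))>n\}$ and monotone convergence gives
\[
P V(x) \le V(x) - \delta \hat f(x) + b\hat g(x) \le V(x) - \delta\hat f(x) + b,
\]
where $\hat f(x)\eqdef\int_0^1 P^s f(x)\,ds \ge 1$ and $\hat g(x)\eqdef\int_0^1 P^s\ind_C(x)\,ds\in[0,1]$. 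Using that $\psi$-irreducibility and aperiodicity descend to the $1$-skeleton \cite{meytwe93e}, and that \eqref{e:f-reg} already gives continuous-time $f$-regularity of $C$, one recasts this bound in the standard form $PV\le V-\tfrac{\delta}{2}\hat f + b'\ind_D$ with $D$ petite for $P$; this is the one nontrivial bookkeeping step and is sketched below as the main obstacle.

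Second, apply the discrete-time $f$-Norm Ergodic Theorem of \cite{MT} with drift function $V$ and rate function $\hat f$ to obtain, for a constant $B$,
\[
\sum_{n=0}^\infty \|P^n(x,\varble) - P^n(y,\varble)\|_{\hat f} \le B(V(x)+V(y)+1),\qquad x,y\in\state.
\]
Third, interpolate to continuous time. For $s\in[0,1]$ and $|g|\le f$ one has $|P^s g|\le P^s f$, so
\[
\|P^{n+s}(x,\varble)-P^{n+s}(y,\varble)\|_f = \sup_{|g|\le f}|P^n(P^s g)(x)-P^n(P^s g)(y)| \le |P^n(x,\varble)-P^n(y,\varble)|(P^s f).
\]
Integrating over $s\in[0,1]$ and invoking Fubini on the right-hand side,
\[
\int_0^1 \|P^{n+s}(x,\varble)-P^{n+s}(y,\varble)\|_f\,ds \le |P^n(x,\varble)-P^n(y,\varble)|(\hat f) = \|P^n(x,\varble)-P^n(y,\varble)\|_{\hat f}.
\]
Summing over $n$ yields \eqref{e:f-ergo-6xy}. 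For \eqref{e:f-ergo-6}, invariance gives $\pi(g)=\int\pi(dy)P^t(y,g)$ for $|g|\le f$, hence $\|P^t(x,\varble)-\pi\|_f\le\int\pi(dy)\|P^t(x,\varble)-P^t(y,\varble)\|_f$; integrating in $t$, applying Fubini, and using $\pi(V)<\infty$ yields the claim with $B_f$ proportional to $B_f^0(1+\pi(V))$.

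The main obstacle I anticipate is Step~1: putting the discrete drift into the canonical form required by MT's theorem. The raw inequality $PV\le V-\delta\hat f+b$ is not in the standard form, and sublevel sets of $\hat f$ are not \emph{a priori} sublevel sets of $V$. The fix is to note that, on the set $A=\{\hat f\ge 2b/\delta\}$, the drift is genuinely negative, and on $A^c=\{\hat f<2b/\delta\}$ one has $P V\le V-(\delta/2)\hat f+b\ind_{A^c}$; one must then verify that $A^c$ is petite for the skeleton. This follows because the continuous-time $f$-regularity bound \eqref{e:f-reg} combined with $\psi$-irreducibility of $P$ (which implies that sublevel sets of any solution to a Foster-type inequality are petite, via \cite[Prop.~3.4]{meytwe93e}) forces $A^c\subseteq\{V\le V_0\}$ up to a $\psi$-null set for some finite $V_0$, which is petite. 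A secondary technical point is the rigorous use of Dynkin's formula when $V$ is extended-real-valued and the associated process is only a local martingale; this is handled by localizing on exits from sublevel sets of $V$ and invoking Fatou and monotone convergence, keeping track of non-explosion which is already assumed.
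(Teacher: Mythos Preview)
Your overall architecture matches the paper's: reduce to a skeleton chain, apply the discrete-time $f$-Norm Ergodic Theorem from \cite{MT}, then interpolate back to continuous time. Your interpolation inequality is exactly the content of the paper's \Lemma{t:normEquiv}, and your derivation of \eqref{e:f-ergo-6} by integrating \eqref{e:f-ergo-6xy} against $\pi(dy)$ is a clean alternative to the paper's direct appeal to Theorem~14.3.5 of \cite{MT}.

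The gap is in Step~1, precisely where you flag it, and your proposed fix does not work. From $PV\le V-\delta\hat f+b$ you want $D=\{\hat f<2b/\delta\}$ to be petite, arguing that $D\subset\{V\le V_0\}$ up to a $\psi$-null set. There is no such containment: $\hat f$ and $V$ are decoupled. The case $f\equiv 1$ already breaks it---then $\hat f\equiv 1$, so whenever $\delta<2b$ (which (V3) certainly allows) you get $D=\state$, and the whole state space is not petite unless the chain is uniformly ergodic. Nothing in the hypotheses forces sublevel sets of $\hat f$ to sit inside sublevel sets of $V$, and the reference to \cite[Prop.~3.4]{meytwe93e} does not supply this.

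The paper sidesteps the issue by \emph{not} using $V$ as the skeleton Lyapunov function. Its \Proposition{t:regEquiv} starts from continuous-time $f$-regularity (which (V3) delivers via \Theorem{t:ergod-big-f}) and builds a new function $V_\Delta$ for which $P^\Delta V_\Delta\le V_\Delta-f_\Delta+b\ind_C$ holds with the \emph{original} closed small set $C$. The mechanism is to keep the localized term---in your notation $b\hat g$, which you discarded in favor of the constant $b$---and control it via a minorization $P^{k_0\Delta}(x,C)\ge\epsy_0 s(x)$ with $s(x)=\Probsub_x\{\tau_C\le\Delta\}$; one then adds the uniformly bounded correction $b_0\, G_C^\Delta(\,\cdot\,,s)$ to the Lyapunov function so that the residual collapses onto $\ind_C$. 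If you want to rescue your direct-Dynkin route, this is the missing idea: retain $\hat g$ and modify $V$ by a bounded additive term so that the drift localizes on $C$ itself rather than on a sublevel set of $\hat f$.
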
 

Although the full resolution of Q1
remains open, in Section~\ref{s:ergodic} we discuss how \eqref{e:MET} can be established  under additional conditions on the process $\bfPhi$.

We begin, in the following section, with the proof of the 
implication (ii) $\Rightarrow$ (iii),  which is based on theory of generalized resolvents and $f$-regularity   
\cite{meytwe93e}.   Following this result,
it is shown in \Proposition{t:regEquiv} that 
$f$-regularity of the process is equivalent to 
$f_\Delta$-regularity for the sampled process, 
where $\Delta$ is the sampling interval, and,
\begin{equation}
f_\Delta(x) = \int_0^\Delta \Expect_x[f(\Phi(t))]\, dt,
\;\;\;x\in\state.
\label{e:fDelta}
\end{equation}
This is the basis of the proof of \Theorem{t:ergod-Q2} 
that is contained in \Section{s:ergodic}.

\paragraph{Acknowledgment.} 
The work reported in this note was prompted by a question
of Yuanyuan Liu who, in a private communication, pointed 
out to us that some results in our earlier work
\cite{konmey01a} were stated inaccurately. Specifically:
(1.) 
The implication (ii) $\Rightarrow$ (iii) in
Theorem~2.2 of \cite{kontoyiannis-meyn:I},
which is the same as the corresponding result
in our present Theorem~\ref{t:ergod-big-f},
was stated there without proof;
and (2.) The convergence in 
\eqref{e:MET} was stated as a consequence
of any of the three equivalent conditions
(i)---(iii), again without proof. 
This note attempts to address and correct
these omissions, although the relevant
statements
in \cite{konmey01a} were only discussed
as background material and do not affect
any of the subsequent results in that paper.

\section{$f$-Regularity}
\label{s:fReg}

Following \cite{meytwe93e}, we denote for each $r\ge 0$ and $B\in\clB$, 
\begin{equation}
 G_B(x,f;r)\eqdef  \Expect_x\Bigl[\int_0^{\tau_B(r)} f(\Phi(t)) \, dt \Bigr] ,
\label{e:GCB}
\end{equation}
where $\tau_B(r) = \inf\{ t\ge r : \Phi(t)\in B \}$, and we write $ G_B(x,f) =  G_B(x,f;0)$.
The Markov process is called \textit{$f$-regular} if there exists $r_0>0$ such that  $G_B(x,f;r_0)<\infty$ for every $x$ and every 
$B\in\clB$ satisfying $\psi(B)>0$.

The following result,
given here without proof, is a simple consequence
of Lemma~4.1 and Prop.~4.3 of \cite{meytwe93e}:
\begin{proposition}
\label{t:meytwe93eGCB}
Suppose that the set $C$ is closed and small, and that the following 
self-regularity property holds:  There exists $r_0>0$ such that  
$\sup_{x\in C}G_C(x,f;r_0)<\infty$.    Then:
\begin{romannum}
\item There is $b_C<\infty$ such that $G_C(x,f;r)<G_C(x,f;r_0) + b_C r$ for each $x$ and $r$.
\item For each $B\in\clB$ satisfying $\psi(B)>0$,  for each $r\ge 0$,  and for each $x\in\state $,
\[
G_C(x,f;r) < \infty \Rightarrow G_B(x,f;r) <\infty.
\]
\end{romannum}
Consequently,
the process is $f$-regular if  $G_C(x,f;r_0)<\infty$ for each $x$. 
\end{proposition}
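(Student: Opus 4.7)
The plan is to prove (i) by a cycle decomposition based on $r_0$-delayed hitting times, and to prove (ii) by coupling the self-regularity of $C$ with the fact that, from $C$, the set $B$ is reached within bounded expected cost. The concluding $f$-regularity claim then falls out directly. Both steps are standard within the Meyn--Tweedie framework, which is why the proposition is stated as a consequence of Lemma~4.1 and Proposition~4.3 of \cite{meytwe93e}.

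For (i), the bound is trivial for $r\le r_0$ since $r\mapsto \tau_C(r)$ is nondecreasing. For $r>r_0$ I would iterate $r_0$-delayed hitting times: set $\tau_0=0$, $y_0=x$, and recursively let $\tau_{k+1}$ be the first time $t\ge \tau_k+r_0$ at which $\Phi(t)\in C$, writing $y_k=\Phi(\tau_k)$ so that $y_k\in C$ for every $k\ge 1$. Because $\tau_{k+1}-\tau_k\ge r_0$ by construction, the index $N(r)=\min\{k:\tau_k\ge r\}$ is deterministically bounded by $\lceil r/r_0\rceil$, and $\tau_C(r)\le \tau_{N(r)}$. Splitting $\int_0^{\tau_{N(r)}} f(\Phi(t))\,dt$ over the intervals $[\tau_k,\tau_{k+1})$ and applying the strong Markov property at each $\tau_k$, the $k=0$ term equals $G_C(x,f;r_0)$ while every subsequent term is bounded by $M\eqdef \sup_{y\in C} G_C(y,f;r_0)<\infty$. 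This yields $G_C(x,f;r)\le G_C(x,f;r_0)+(\lceil r/r_0\rceil-1)M$, giving (i) with $b_C=M/r_0$.

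For (ii), the strategy is first to reach $C$, at finite cost by hypothesis, and then to reach $B$ from $C$ in a geometrically bounded number of excursions. The key sublemma is the existence of a uniform lower bound $P^{T'}(y,B)\ge \delta$ for some $(T',\delta)$ valid for every $y\in C$. This is where the small-set condition $P^T(y,\varble)\ge \epsy\nu(\varble)$ interacts with $\psi$-irreducibility: the minorising measure $\nu$ is absolutely continuous with respect to a maximal irreducibility measure, and $\psi$-irreducibility guarantees that $B$ is reached with positive probability from a $\nu$-distributed start within some further time $s$, so composing $P^T$ with a suitable $P^s$ yields the desired uniform lower bound. Given such a $(T',\delta)$, each excursion from $C$ has bounded expected $f$-cost (by (i)) and hits $B$ with probability at least $\delta$; the geometric tail of the excursion count then produces $G_B(x,f;r)<\infty$. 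The concluding statement is immediate: if $G_C(x,f;r_0)<\infty$ for all $x$, then (ii) upgrades this to $G_B(x,f;r_0)<\infty$ for every $x$ and every $B$ with $\psi(B)>0$, which is exactly $f$-regularity.

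I expect the main obstacle to be producing the uniform pair $(T',\delta)$ in (ii): extracting a single time $T'$ and probability $\delta>0$ that work simultaneously for every $y\in C$ and the fixed $B$ requires combining the small-set minorisation at $C$ with a forward kernel whose reach is secured by $\psi$-irreducibility, and care is needed to handle possible $\psi$-null exceptional behaviour. The cleanest route is to invoke Lemma~4.1 and Proposition~4.3 of \cite{meytwe93e} directly, which are tailored to exactly this passage from self-$f$-regularity of a small set to global $f$-regularity.
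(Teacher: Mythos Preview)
Your proposal is correct, and in fact goes beyond what the paper does: the paper states this proposition explicitly ``without proof,'' deferring entirely to Lemma~4.1 and Proposition~4.3 of \cite{meytwe93e}. Your sketch is a sound reconstruction of the standard argument underlying those cited results---the cycle decomposition for (i) and the geometric-excursion argument for (ii)---and you correctly identify the one nontrivial step, namely producing a uniform pair $(T',\delta)$ with $P^{T'}(y,B)\ge\delta$ for all $y\in C$, which is obtained exactly as you describe by composing the small-set minorisation $P^T(y,\varble)\ge\epsy\nu(\varble)$ with a further step $P^s$ chosen so that $\int\nu(dz)P^s(z,B)>0$ (guaranteed by $\psi$-irreducibility). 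Since you also note that the cleanest route is simply to invoke the cited lemma and proposition, your proposal and the paper are in complete agreement.
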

 
We next show that the function $V$ in \eqref{e:V3converse}
is finite-valued on $\{x\in\state: G_C(x,f;r_0)<\infty\}$.  
We show that $V$ is in the domain of the extended generator,  
and obtain an expression for $\clD V$.

Consider the generalized resolvent developed in 
 \cite{meytwe93e,nev72}:  For a function $h\colon\state\to \Re_+$,  $A\in\clB$, and $x\in\state$, denote,
\[
R_h(x,A) =  \int_0^\infty \Expect_x\Bigl[ \ind_A(\Phi(t))  \exp\Bigl(-\int_0^t h(\Phi(s) ) \,ds  \Bigr)\Bigr] \, dt.
\]
With the usual interpretation
of $P^t$, or any kernel $Q(x,dy)$,
as a lineal operator,
$g \mapsto Qg=\int g(y)Q(\cdot,dy)$,
it is shown in \cite{nev72} that the following 
resolvent equation holds:  For any functions $g\ge h \ge 0$,
\begin{equation}
R_h  = R_g + R_g  I_{g-h} R_h,
\label{e:ResolveEqnNeveu}
\end{equation}
where, for any function $g$,
$I_g$ denotes the (operator
induced by the) kernel 
$I_g(x,dy) =g(x)\delta_x(dy)$.

When $h\equiv \alpha$ is constant,
we obtain the usual resolvent,  
\begin{equation}
R_\alpha \eqdef   \int_0^\infty  e^{-\alpha t} P^t\, dt,
\;\;\;\;\alpha>0,
\label{resolvent}
\end{equation} 
In the case $\alpha=1$ we  
write $R:=R_1 = \int_0^\infty  e^{-  t} P^t\, dt$,
and call $R$ ``the'' resolvent kernel.  For any non-negative function $g\colon\state\to \Re_+ $ for which $Rg$ is finite valued, 
the function $\gamma=Rg$ is in the domain of the extended generator, 
with,
\begin{equation}
\clD \gamma = Rg - g.
\label{e:DR}
\end{equation}
 
\begin{proposition}
\label{t:V3converse}
Suppose that the assumptions of  \Theorem{t:ergod-big-f}~$(ii)$ hold:  
There is a closed, small set $C \in \clB$ such that,
$\sup_{x\in C}G_C(x,f;r_0)<\infty$ with $r_0=1$.
Then the function $V$ defined in \eqref{e:V3converse-tilde} 
is finite on the full set $S_V\subset\state$ and (V3) holds 
with this function $V$ and this closed set~$C$. 
\end{proposition}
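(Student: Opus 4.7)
The plan is to identify $V$ with the generalized resolvent applied to $f$, namely $V = R_{\ind_C} f$, and then to derive (V3) from Neveu's resolvent identity \eqref{e:ResolveEqnNeveu} together with the generator formula \eqref{e:DR}. Setting $g \equiv 1$ and $h = \ind_C$ in \eqref{e:ResolveEqnNeveu} yields
\[
V \;=\; R f \;+\; R\bigl[(1-\ind_C)V\bigr] \;=\; R\bigl[f + (1-\ind_C)V\bigr],
\]
so $V$ lies in the range of the standard resolvent $R$, and hence in the domain of the extended generator. The identity \eqref{e:DR} then produces
\[
\clD V \;=\; V - \bigl[f + (1-\ind_C)V\bigr] \;=\; -f + \ind_C V,
\]
which is exactly (V3) in the form $\clD V \le -f + b\,\ind_C$ as soon as $V$ is uniformly bounded on $C$ by some $b < \infty$.

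The main task is therefore to show that $V$ is finite and uniformly bounded on $C$. For $x \in C$, set $\tau = \tau_C(1)$ and $A(t) = \int_0^t \ind_C(\Phi(s))\, ds$. The memoryless property of the auxiliary exponential $\tilT$ in the representation \eqref{e:V3converse-tilde}, together with the strong Markov property at $\tau$, gives the one-step decomposition
\[
V(x) \;=\; \Expect_x\Bigl[\int_0^{\tau} e^{-A(t)} f(\Phi(t))\, dt\Bigr] \;+\; \Expect_x\bigl[e^{-A(\tau)}\,V(\Phi(\tau))\bigr].
\]
The first summand is dominated by $G_C(x,f;1) \le M := \sup_{y\in C} G_C(y,f;1)$, which is finite by hypothesis. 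Iterating this identity along the successive return times $\tau_0 = 0, \tau_1, \tau_2, \dots$ of $\Phi$ to $C$, each separated by at least one unit of time, yields the geometric-type bound
\[
V(x) \;\le\; M \sum_{k=0}^\infty \Expect_x\bigl[e^{-A(\tau_k)}\bigr].
\]
Since $\Phi \notin C$ on each interval $[\tau_{k-1}+1, \tau_k)$, the increments satisfy $A(\tau_k) - A(\tau_{k-1}) = \int_{\tau_{k-1}}^{\tau_{k-1}+1} \ind_C(\Phi(s))\, ds$, and by the strong Markov property the convergence of this series reduces to finding a uniform constant $\rho < 1$ with $\sup_{y\in C}\Expect_y[e^{-a_1}]\le \rho$, where $a_1 := \int_0^1 \ind_C(\Phi(s))\, ds$.

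The principal obstacle is establishing this geometric rate $\rho<1$: it requires ruling out degenerate scenarios in which the process almost surely exits $C$ at time $0+$ and spends zero Lebesgue time in $C$ during $[0,1]$. Here the small-set minorization of $C$ should be combined with the right-continuity of sample paths to show that from every $y \in C$ the process spends a uniformly nontrivial amount of time in $C$ on short time intervals. Once $V$ is uniformly bounded on $C$, (V3) follows immediately from the generator identity above, and the set $S_V = \{V < \infty\}$ contains $C$, is absorbing by the standard supermartingale argument applied to (V3), and is $\psi$-full, since $\psi(C) > 0$ and $\psi$-irreducibility propagates finiteness of $V$ to all of $\state$ outside a $\psi$-null set.
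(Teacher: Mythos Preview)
Your resolvent computation for $\clD V$ is correct and essentially identical to the paper's: both write $V = R_{\ind_C} f = R[f + \ind_{C^c}V]$ via Neveu's identity \eqref{e:ResolveEqnNeveu} and then apply \eqref{e:DR} to obtain $\clD V = -f + \ind_C V$.

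The gap is exactly where you locate it: showing $\sup_{y\in C} V(y) < \infty$. Your geometric-series reduction to
\[
\rho \;:=\; \sup_{y\in C}\Expect_y\Bigl[\exp\Bigl(-\int_0^1 \ind_C(\Phi(s))\,ds\Bigr)\Bigr] \;<\; 1
\]
is a correct reformulation, but the suggested fix does not close it. The small-set minorization $P^T(y,\cdot)\ge\epsy\nu(\cdot)$ constrains the distribution of $\Phi(T)$, not the occupation time of $C$ on $[0,T]$; and right-continuity of paths together with $C$ \emph{closed} gives nothing --- you would need $y$ in the interior of $C$, or $C$ open, to force positive sojourn time near $t=0$. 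Nothing in the hypotheses prevents the process from leaving $C$ instantaneously from every point of $C$ (think of $C$ a lower-dimensional compact set for a diffusion with smooth densities: $C$ is small, yet its occupation time is zero a.s.), so $\rho=1$ cannot be excluded by these two ingredients alone.

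The paper does not attempt this occupation-time route at all. It imports the bound $\sup_C V<\infty$ directly from Propositions~4.3(ii) and~4.4(ii) of \cite{meytwe93e}, which derive it from the self-$f$-regularity hypothesis $\sup_{x\in C} G_C(x,f;1)<\infty$ via generalized-resolvent comparison arguments, bypassing any uniform sojourn-time estimate. Once $\sup_C V<\infty$ is available, the paper bounds $V$ off $C$ by a single application of the strong Markov property at $\tau_C$,
\[
V(x)\;\le\; G_C(x,f;1)+\sup_{x'\in C}V(x'),
\]
rather than by iterating. Your final paragraph on $S_V$ being absorbing and full is fine, but it presupposes the missing bound on $C$.
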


\begin{proof}
Proposition 4.3 (ii) 
of \cite{meytwe93e}
implies that the set of $x$ for which $G_C(x,f;1) <\infty $ is a full set.
This result combined with Proposition 4.4 (ii) 
of \cite{meytwe93e}
implies that $V$ is bounded on $C$.  

For arbitrary $x$ we have $\tiltau_C>  
\tau_C = \min\{ t\ge 0 : \Phi(t)\in C \}$.   Consequently,  
by the strong Markov property and the representation 
\eqref{e:V3converse-tilde},
\[
\begin{aligned}
V(x) & =
\Expect_x\Bigl[\int_0^{\tau_C} f(\Phi(t)) \, dt \Bigr] 
+
\Expect_x\Bigl[\Expect_{\Phi(\tau_C)} \Bigl[\int_0^{\tiltau_C} f(\Phi(t)) \, dt \Bigr] \Bigr] 
\\
&\le
G_C(x,f;1) +\sup_{x'\in C} V(x').
\end{aligned}
\]
Hence $V(x)$ is finite whenever $G_C(x,f;1) $ is finite.

To establish (V3), first observe that 
the function $V$ in \eqref{e:V3converse-tilde} can be expressed,
\[
V = R_h f,  \qquad \text{with $h=\ind_C$}.
\]
Taking  $g\equiv 1$, the resolvent equation gives,
\[
R_h  = R+ R  I_{1-h} R_h = R[I + I_{C^c} R_h],
\]
where, for any set $B$ and kernel $Q$, $I_BQ$ denotes the
kernel $\IND_B(x)Q(x,dy)$.
Combining the representation of $V$ above with \eqref{e:DR}
we obtain,
\[
\begin{aligned}
V  &= R[I + I_{C^c} R_h]f
\\
\text{and} \qquad
\clD V &=   (R-I) [I + I_{C^c} R_h]f.
\end{aligned}
\]
The second equation can be decomposed as follows,
\[
\clD V  =  D_1 - D_2 -f,
\]
with $D_1=  R [I + I_{C^c} R_h]f  = V$ and  $D_2=I_{C^c} R_h f = I_{C^c} V$.  
Substitution then gives,  
\[
\clD V = -  f +  \ind_C V.
\]
This establishes (V3) with $b=\sup_{x\in C} V(x)$.
\end{proof}

The final results in this section concern the 
\textit{$\Delta$-skeleton chain}.  This is the 
discrete-time Markov chain with transition kernel $P^\Delta$, 
where $\Delta\ge 1$ is given. 
  It can be realized by sampling the Markov process with sampling interval  $\Delta$.  The sampled process is denoted,
\begin{equation}
X(i) = \Phi(i\Delta),\qquad i\ge 0.
\label{e:skeleton}
\end{equation}

In prior work, the skeleton chain is used to 
translate ergodicity results for 
discrete-time Markov chains to the continuous time setting.  
For example,  Theorem 6.1 of \cite{meytwe93a} implies 
that a weak version of the ergodic
convergence \eqref{e:MET} holds for an 
$f$-regular Markov process:
\begin{equation}
\lim_{t\to\infty} \| P^t(x,\varble) - \pi(\varble) \|_1 = 0\, .
\label{e:MET1}
\end{equation}
The proof consists of two ingredients:  
(i) The corresponding ergodicity
result holds for the $\Delta$-skeleton chain, and (ii)
the error $\| P^t(x,\varble) - \pi(\varble) \|_1 $ is non-increasing in $t$.   

In the next section we use a similar approach to address question Q2.   
The $f_\Delta$ norm is considered, where 
the function $f_\Delta$ is defined in \eqref{e:fDelta}.   Denote,
\[
\sigma^\Delta_C =\min\{i \ge 0: X(i)\in C\},\qquad
\tau^\Delta_C =\min\{i \ge 1 : X(i)\in C\}\, .
\]
The
$\Delta$-skeleton is called {\em $f_\Delta$-regular} if,
\[
 G_B^\Delta (x,f_\Delta)\eqdef  
\Expect_x\Bigl[\sum_{i=0}^{\tau^\Delta_B} f_\Delta(X(i)) \Bigr] <\infty,
\]
for every $x\in\state$ and every $B\in\clB$ satisfying $\psi(B)>0$.

\begin{proposition}
\label{t:regEquiv} 
If the process $\bfPhi$ is $f$-regular,
then each $\Delta$-skeleton is $f_\Delta$-regular.  
Moreover,  there is a closed $f$-regular set $C$ such that:
\begin{romannum}
\item For a finite-valued function $V_\Delta\colon\state \to (0,\infty]$   
and a finite constant $b$, 
\begin{equation}
P^\Delta V_\Delta \leq V_\Delta -  f_\Delta+b\ind_C\, ,
\label{e:VDelta3}
\end{equation}
and $\sup_x| V_\Delta(x)
- G_C(x,f) |<\infty$.

\item
For every $x\in\state$ and every $B\in\clB$ satisfying $\psi(B)>0$, 
there is a constant $c_B<\infty$ such that,
\begin{equation}
 G_B^\Delta (x,f_\Delta) \le  G_C(x,f) + c_B.
\label{e:GBbdd}
\end{equation}
\end{romannum}
\end{proposition}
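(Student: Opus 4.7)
My approach is to leverage the continuous-time ingredients already established (Propositions \ref{t:meytwe93eGCB} and \ref{t:V3converse}) by constructing $V_\Delta$ as a shifted hitting-time expectation, and then propagating the drift and regularity to the $\Delta$-skeleton through the ordinary Markov property at time $\Delta$. Part (ii) then reduces to a standard discrete-time Foster-Lyapunov argument applied to the kernel $P^\Delta$.

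I would first invoke Proposition \ref{t:meytwe93eGCB} together with the $f$-regularity of $\bfPhi$ to fix a closed small set $C$ with $\sup_{x\in C} G_C(x,f;1) <\infty$, and then set
\[
V_\Delta(x) := G_C(x,f;\Delta) = \Expect_x\Bigl[\int_0^{\tau_C(\Delta)} f(\Phi(t))\,dt\Bigr].
\]
Since $\tau_C \le \tau_C(\Delta)$, the inequality $V_\Delta \ge G_C(\cdot,f)$ is immediate, and Proposition \ref{t:meytwe93eGCB}(i) yields the matching upper bound $V_\Delta \le G_C(\cdot,f) + b'$ for a finite constant $b'$, which proves the uniform-closeness assertion in~(i) and shows that $V_\Delta$ is finite on the full absorbing set $S_V$ of \Theorem{t:ergod-big-f}.

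Conditioning on $\Phi(\Delta)$ in the defining integral of $V_\Delta$ and using $\tau_C(\Delta)\ge \Delta$ gives the identity $V_\Delta(x) = f_\Delta(x) + P^\Delta G_C(\cdot,f)(x)$, i.e., $P^\Delta G_C(\cdot,f) = V_\Delta - f_\Delta$. Composing with the bound $V_\Delta(y) \le G_C(y,f) + b'$ applied pointwise at $y = \Phi(\Delta)$ then yields the uniform-slack drift
\[
P^\Delta V_\Delta(x) = \Expect_x[G_C(\Phi(\Delta),f;\Delta)] \le P^\Delta G_C(\cdot,f)(x) + b' = V_\Delta(x) - f_\Delta(x) + b'.
\]
The main obstacle is to convert the uniform constant $b'$ into the indicator form $b\ind_C$ required by \eqref{e:VDelta3}. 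The bound $f_\Delta \ge \Delta \ge 1$ (which follows from $f\ge 1$) allows $b'$ to be absorbed into the $f_\Delta$ penalty outside the sublevel set $\{f_\Delta \le b'+1\}$; under the standing $\psi$-irreducibility and aperiodicity hypotheses this sublevel set is petite, and can be enlarged (via the resolvent) into a closed small set containing $C$ while preserving self-$f$-regularity by Proposition \ref{t:meytwe93eGCB}(ii). The fine point is to carry out this enlargement and the accompanying modification of $V_\Delta$ so that the uniform closeness to $G_C(\cdot,f)$ is preserved; this is exactly the technical content of the analogous arguments in \cite{meytwe93e}.

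Part (ii) then follows from (i) by the standard discrete-time Foster-Lyapunov argument \cite[Ch.~14]{MT} applied to $P^\Delta$: telescoping \eqref{e:VDelta3} along the skeleton up to the return time $\tau_C^\Delta$ bounds $G^\Delta_C(x,f_\Delta) \le V_\Delta(x)+b$, and a first-entrance decomposition based on the petiteness of $C$ for the skeleton (inherited from aperiodicity of $\bfPhi$ via the countable small-set covering recalled in Section~1) produces $G^\Delta_B(x,f_\Delta) \le G^\Delta_C(x,f_\Delta) + c_B$ for every $B$ with $\psi(B) > 0$. Combining with the uniform closeness $|V_\Delta - G_C(\cdot,f)| \le b'$ from~(i) gives \eqref{e:GBbdd} and hence the full $f_\Delta$-regularity of every $\Delta$-skeleton.
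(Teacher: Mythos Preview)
Your identity $V_\Delta = f_\Delta + P^\Delta G_C(\,\cdot\,,f)$ is correct, and the resulting drift $P^\Delta V_\Delta \le V_\Delta - f_\Delta + b'$ with a \emph{uniform} constant $b'$ is valid. The gap is in the next step: the assertion that ``under the standing $\psi$-irreducibility and aperiodicity hypotheses the sublevel set $\{f_\Delta \le b'+1\}$ is petite'' is not justified, and in general it is false. $\psi$-irreducibility and aperiodicity by themselves say nothing about petiteness of sublevel sets of an arbitrary function; such statements require either topological hypotheses (Feller plus compactness) or a Lyapunov function whose sublevel sets contain the set in question. Here $f_\Delta$ is not a Lyapunov function for the skeleton, and no such structure is available. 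Your own parenthetical (``the fine point is to carry out this enlargement\ldots'') correctly flags that this is where the argument is incomplete, but the difficulty is not merely technical: without petiteness of $\{f_\Delta\le M\}$ you cannot pass from the uniform-slack drift to \eqref{e:VDelta3}, and the subsequent appeal to \cite{meytwe93e} does not supply this.

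The paper avoids this obstacle entirely by working with $V_0=G_C(\,\cdot\,,f)$ rather than $G_C(\,\cdot\,,f;\Delta)$ and decomposing $P^\Delta V_0$ according to whether $\tau_C\le\Delta$. This produces the sharper remainder $P^\Delta V_0 \le V_0 - f_\Delta + b_0\, s$ with $s(x)=\Prob_x\{\tau_C\le\Delta\}$ in place of your uniform $b'$. The key observation is then that $s$ is a \emph{small function} for the skeleton: using the strong Markov property at $\tau_C$ together with the total-variation ergodicity \eqref{e:MET1}, one gets $P^{k_0\Delta}(x,C)\ge\epsy_0\, s(x)$ for suitable $k_0,\epsy_0$. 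This makes $G_C^\Delta(\,\cdot\,,s)$ uniformly bounded, so that $V_\Delta := V_0 + b_0\, G_C^\Delta(\,\cdot\,,s)$ differs from $G_C(\,\cdot\,,f)$ by a bounded quantity \emph{and} satisfies \eqref{e:VDelta3} with the original $C$ unchanged --- the added term cancels $b_0 s$ exactly off $C$. No enlargement of $C$ and no petiteness of sublevel sets is needed. Your treatment of part~(ii) via \cite[Ch.~14]{MT} matches the paper's.
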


\begin{proof}
It is enough to establish (i).
Theorem 14.2.3 of \cite{MT} then implies that for every $B\in\clB$ satisfying $\psi(B)>0$, there is a constant $c_B^\Delta<\infty$ satisfying $ G_B^\Delta (x,f_\Delta) \le V_\Delta(x) + c_B^\Delta$.    

Let $C$ denote any closed $f$-regular set for the process, satisfying $\psi(C)>0$.  
For $V_0(x) = G_C(x,f)$ we obtain a bound similar to  \eqref{e:VDelta3} through the following steps.  First write,
\[
P^\Delta V_0\, (x) = \Expect_x\Bigl[\int_\Delta^{\tau_C(\Delta)} f(\Phi(t)) \, dt \Bigr] \, .
\]
The integral can be expressed as a sum,
\[
\begin{aligned}
\int_\Delta^{\tau_C(\Delta)} f(\Phi(t)) \, dt  
&= \int_\Delta^{\tau_C(\Delta)} f(\Phi(t)) \, dt \ind\{\tau_C\le \Delta\}
\\
&\quad +\int_\Delta^{\tau_C} f(\Phi(t)) \, dt \ind\{\tau_C > \Delta\}.
\end{aligned}
\]
By the strong Markov property,
\[
\begin{aligned}
  \Expect_x\Bigl[  \ind\{\tau_C\le \Delta\} \int_\Delta^{\tau_C(\Delta)} f(\Phi(t)) \, dt\Bigr] 
&\le 
  \Expect_x\Bigl[ \ind\{\tau_C\le \Delta\} \int_{\tau_C}^{\tau_C(\Delta)} f(\Phi(t)) \, dt \Bigr] 
\\
&\le \Prob_x\{ \tau_C\le \Delta \} \sup_y G_C(y,f;\Delta).
\end{aligned}
\]
Consequently, 
\begin{equation} 
P^\Delta V_0\, (x) \le  \Expect_x\Bigl[\int_\Delta^{\tau_C} f(\Phi(t)) \, dt \Bigr]  +  b_0 s(x)
= V_0(x) - f_\Delta(x) +  b_0 s(x),
\label{e:VDelta3a}
\end{equation}
where $b_0= \sup_y G_C(y,f;\Delta)<\infty$,  and $s(x)=\Prob_x\{ \tau_C\le \Delta \} $.

To eliminate the function $s$ in \eqref{e:VDelta3a} we establish the following bound:  For some  $ \epsy_0>0$ and $k_0\ge 1$,
\begin{equation}
P^{k_0\Delta}(x,C)\ge \epsy_0 s(x),\qquad x\in\state.
\label{e:probsmall}
\end{equation}
The proof is again by the strong Markov property:
\[
\begin{aligned}
P^{k_0\Delta} (x,C)  &\ge \Expect_x[ \ind\{\tau_C\le \Delta \} \ind\{ \Phi(k_0\Delta) \in C\} ]
\\
&=\int_{r=0}^\Delta \int_y  \Prob_x\{ \tau_C\in dr,\, \Phi(r)\in dy\}  P^{k_0\Delta - r} (y,C)
\\
&\ge \epsy(k) s(x),
\end{aligned}
\]
where $ \epsy(k) = \inf \{ P^{k_0\Delta - r} (y,C) : y\in C,\ 0\le r\le \Delta\}$.   This is strictly positive for sufficiently large $k$ because \eqref{e:MET1} holds.
This establishes \eqref{e:probsmall}.

The Lyapunov function can now be specified as,
\[
V_\Delta(x) =  V_0(x)  + b_0G_C^\Delta (x,s),
\]
where $b_0 $ is defined in \eqref{e:VDelta3a}.
The required bound $\sup_x|V_\Delta(x)
- G_C(x,f) |<\infty$ holds because
$V_0(x) =
G_C(x,f) $, and
 the second term is uniformly bounded:
\[
\begin{aligned}
G_C^\Delta (x,s) &= \Expect_x\Bigl[\sum_{i=0}^{\tau^\Delta_B} s(X(i)) \Bigr]
\\
&\le \epsy_0^{-1}   \Expect_x\Bigl[\sum_{i=0}^{\tau^\Delta_B} P^{k_0\Delta} (\Phi(i\Delta),C)   \Bigr]   
\\
&= \epsy_0^{-1}   \Expect_x\Bigl[\sum_{i=0}^{\tau^\Delta_B}  \ind\{X(i+k_0) \in C\}   \Bigr]   
 \le  \epsy_0^{-1} (k_0+1).
\end{aligned}
\]
Consequently, from  familiar arguments,
\[
\begin{aligned}
PV_\Delta(x) - V_\Delta(x) &\le - f_\Delta(x) +  b_0 s(x)
\\
&\quad + b_0 \Bigl\{ G_C^\Delta (x,s) - s(x) + \ind_C(x) \epsy_0^{-1} (k_0+1)  \Bigr\}. \end{aligned}
\]
This establishes \eqref{e:VDelta3}
with $b=b_0
\epsy_0^{-1} (k_0+1) $.
\end{proof}

\def\tiln{\tilde n}
\def\Tcpl{\tilde{\large\tau}}

\section{$f$-Norm Ergodicity}
\label{s:ergodic}
In this section we consider the implications 
to the ergodicity of the process.
We assume that 
(V3) holds for a finite-valued function $
V\colon\state\to (0,\infty)$, so that the process is $f$-regular.

\bigskip


\noindent
{\bf Q1. $f$-norm ergodicity. }
The ergodicity of $\bfPhi$ in terms of the $f$-norm
as in \eqref{e:MET} has only been established under special conditions.   
Theorem~5.3 of \cite{meytwe93b} implies that \eqref{e:MET} will hold if 
$f$ is subject to this additional bound:    For some $\beta\ge 0$,
\[
P^t f \le \beta e^{\beta t} f,\qquad t\ge 0.
\]
This holds for example if $f\equiv 1$ and $\beta=1$.   

It is likely that the application of coupling bounds will 
lead to a more general theory.  
Under stronger conditions on the process,  
such a coupling time was obtained in  \cite{loclou08},
and it was used in \cite{locloulou11} to obtain rates 
of convergence in the law of large numbers.  
However, to construct the coupling time, it is assumed in 
this prior work that the semi-group $\{P^t\}$ admits 
a density for each $t$.  No such assumptions are required 
in the discrete-time setting, so the full answer to Q1 
remains open.  

\bigskip
 
\noindent
{\bf Q2. Proof of Theorem~\Theorem{t:ergod-Q2}. }
The copmplete resolution of Q2 is possible 
by applying  \Proposition{t:regEquiv}, which implies that 
the skeleton chain
$\{X(i) = \Phi(i\Delta) : i\ge 0\}$  is $f_\Delta$-regular.   
The bound \eqref{e:GBbdd} is the main ingredient in the proof 
of \Theorem{t:ergod-Q2},
but we also require the following relationship between a 
norm for the process and a norm for the sampled chain.
\begin{lemma}
\label{t:normEquiv}
For any signed measure $\mu$,
\[
\| \mu \|_{f_\Delta} \ge \int_0^\Delta \| \mu P^t \|_f\, dt,
\] 
where, for any measure $\nu$ and kerner $Q$, 
$\nu Q$ denotes the measure $\nu Q(\cdot)=\int \nu(dx) Q(x,\cdot)$.
\end{lemma}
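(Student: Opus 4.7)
The plan is to combine two standard facts: the total-variation representation of the $f$-norm of a signed measure, and the fact that $P^t$ is a positive operator sending test functions dominated by $f$ to functions dominated by $P^t f$.

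First I would record the identity
\[
\| \mu \|_f = \sup_{|g|\le f} |\mu(g)| = |\mu|(f),
\]
where $|\mu|=\mu^+ + \mu^-$ is the total-variation measure from the Hahn--Jordan decomposition of $\mu$. The inequality $|\mu(g)| \le |\mu|(f)$ is immediate from $|g|\le f$, and equality is attained by taking $g$ equal to $+f$ on the Hahn-positive set and $-f$ on its complement.

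Next, for each fixed $t\in [0,\Delta]$, the bound $|g|\le f$ gives the pointwise estimate $|P^t g(x)| \le P^t |g|(x) \le P^t f(x)$, because $P^t$ is a Markov kernel. Hence for every admissible $g$,
\[
|\mu P^t (g)| = |\mu(P^t g)| \le |\mu|(|P^t g|) \le |\mu|(P^t f),
\]
and taking the supremum over $|g|\le f$ yields $\|\mu P^t\|_f \le |\mu|(P^t f)$. Integrating this pointwise-in-$t$ bound on $[0,\Delta]$ and invoking Tonelli's theorem (all integrands are nonnegative) to interchange the $t$- and $x$-integrations, I obtain
\[
\int_0^\Delta \|\mu P^t\|_f\, dt \le \int_0^\Delta |\mu|(P^t f)\, dt = |\mu|\Bigl(\int_0^\Delta P^t f\, dt\Bigr) = |\mu|(f_\Delta) = \|\mu\|_{f_\Delta},
\]
which is exactly the asserted inequality.

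There is no substantive obstacle here: the only technical point worth flagging is that Tonelli requires joint measurability of $(t,x)\mapsto P^t f(x)$, which is automatic for a Borel right process since $t\mapsto \Expect_x[f(\Phi(t))]$ is right-continuous. If $\|\mu\|_{f_\Delta}=\infty$ the bound is vacuous, so no integrability hypothesis on $\mu$ is needed.
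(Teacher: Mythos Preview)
Your proof is correct. It is more direct than the paper's argument: the paper lifts $\mu P^t$ to a signed measure $\Gamma(dt,dy)=\mu P^t(dy)\,dt$ on the product space $[0,\Delta]\times\state$, identifies $\int_0^\Delta\|\mu P^t\|_f\,dt$ with the Jordan norm $\|\Gamma\|_{f^\Delta}$, and then observes that $\Lambda_\pm(dt,dy)=\mu_\pm P^t(dy)\,dt$ furnishes a (generally non-minimal) decomposition $\Gamma=\Lambda_+-\Lambda_-$ whose total $f^\Delta$-mass is exactly $|\mu|(f_\Delta)=\|\mu\|_{f_\Delta}$; minimality of the Jordan decomposition then yields the inequality. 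Your route bypasses the product space entirely by working fibrewise in $t$: the single bound $\|\mu P^t\|_f\le |\mu|(P^tf)$, obtained from $|P^tg|\le P^tf$, is precisely the slice-by-slice version of the paper's minimality step. Both arguments ultimately encode the inequality $|\mu P^t|\le |\mu|P^t$ of measures; yours reaches it in one line, while the paper's formulation makes the role of the Jordan decomposition more visible at the cost of extra scaffolding.
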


\begin{proof}
We first consider the right-hand side.  Consider the signed 
measure $\Gamma$ on $[0,\Delta]\times\state$ defined by:
\[
\Gamma(dt, dy) =   \mu P^t (dy)dt .
\]
Define $f^\Delta\colon [0,\Delta]\times\state\to [1,\infty)$ 
via $f(t,y)=f(y)$ for each pair $t,y$, and the associated norm,
\[
\| \Gamma \|_{f^\Delta} = \sup \iint g(t,y) \Gamma(dt, dy),
\]
where the supremum is over all $g$ satisfying $|g(t,y)|\le f^\Delta(t,y)$ 
for all $t,y$.   It is shown next that the norm can be expressed,
\begin{equation}
\| \Gamma \|_{f^\Delta} = 
\int_0^\Delta \| \mu P^t \|_f\, dt.
\label{e:GammaNorm}
\end{equation}

The Jordan decomposition 
theorem \cite{halmos50book}
implies that there is a minimal decomposition, 
$\Gamma = \Gamma_+-\Gamma_-$,
in which the two measures on the right-hand side 
are non-negative,  with disjoint supports denoted $S_+,S_-$,
resoectively.
Hence $|\Gamma | \eqdef \Gamma_+ + \Gamma_-$ is a non-negative measure.
In this notation the norm is expressed,
\[
\begin{aligned}
\| \Gamma \|_{f^\Delta} &=   \iint f^\Delta(t,y) |\Gamma | (dt, dy)   \\
&=
 \iint  f(y) \bigl( \ind_{S_+}(t,y) - \ind_{S_-}(t,y) \bigr) \Gamma(dt, dy) 
\\
  &=  \int_0^\Delta \Bigl[ \int_{y\in\state} f(y) \bigl( \ind_{S_+}(t,y) - \ind_{S_-}(t,y) \bigr) \mu P^t(dy) \Bigr]\, dt.
\end{aligned}
\]
For each $t$, the measure on $(\state,\clB)$ defined by 
$\bigl( \ind_{S_+}(t,y) - \ind_{S_-}(t,y) \bigr) \mu P^t(dy)$ is 
the marginal of $|\Gamma|$, and is hence a non-negative measure 
for a.e.\ $t$.  It follows that for such $t$,
\[
\int_{y\in\state} f(y) \bigl( \ind_{S_+}(t,y) - \ind_{S_-}(t,y) \bigr) \mu P^t(dy) = \| \mu P^t\|_f,
\]
which gives
\eqref{e:GammaNorm}.  

Consider next the left-hand side of the inequality in the lemma.  
Letting $\mu=\mu_+-\mu_-$ denote the Jordan decomposition for 
the signed measure $\mu$,  and 
$|\mu|=\mu_+ + \mu_-$,
we have,
\[
\| \mu \|_{f_\Delta} =  \int f^\Delta (x) |\mu|(dx) = \int_{t=0}^\Delta \int_{x\in\state}  |\mu|(dx) P^t(x,dy) f(y).
\]
The right-hand side can be expressed as,
\[
 \int_0^\Delta \int  |\mu|(dx) P^t(x,dy) f(y) = \iint f^\Delta(t,y) \Lambda_+(dt,dy) +  \iint f^\Delta(t,y) \Lambda_-(dt,dy) ,
\]
where $\Lambda_\pm(dt,dy) =  \mu_\pm P^t (dy)dt$ defines a decomposition:
\[
\Gamma = \Lambda_+-\Lambda_-\,.
\]
It follows that $\| \mu \|_{f_\Delta}  \ge \| \Gamma \|_{f^\Delta} $, 
by the minimality of the Jordan decomposition.  
This bound combined with \eqref{e:GammaNorm}
completes the proof.
\end{proof}

\begin{proof}[Proof of \Theorem{t:ergod-Q2}] 
\Theorem{t:ergod-big-f} combined with
\Proposition{t:regEquiv} establishes $f_\Delta$-regularity of the skeleton chain under (V3):  The skeleton chain satisfies (V3) with Lyapunov function $V_\Delta$ that satisfies 
$\sup_x| V_\Delta(x) - G_C(x,f) |<\infty$.  The  bound \eqref{e:f-reg} in \Theorem{t:ergod-big-f}  implies that $V_\Delta(x) \le b_f^\Delta (V(x)+1) $ for some constant $b_f^\Delta$ and  all $x$.  

Theorem 14.3.4 of \cite{MT} then gives the bound, for some finite constant $M_f^0<\infty$,
\begin{equation}
\sum_{k=0}^\infty \| P^{\Delta k}(x,\varble) - P^{\Delta k}(y,\varble) \|_{f_\Delta}
\le M_f^0(V(x)+V(y)+1).
\label{e:MT14.3.4}
\end{equation}
Next  apply  \Lemma{t:normEquiv} with $\mu(\varble) 
= P^{\Delta k}(x,\varble) - P^{\Delta k}(y,\varble)$ to obtain,
\begin{equation}
\| P^{\Delta k}(x,\varble) - P^{\Delta k}(y,\varble) \|_{f_\Delta}
\ge \int_0^\Delta \| \mu P^t \|_f\, dt,
\label{e:Skeletonf-bdd}
\end{equation}
and recognize that $\mu P^t (\varble)= P^{\Delta k+t}(x,\varble) - P^{\Delta k+t}(y,\varble)$.
Substituting the resulting bound into \eqref{e:MT14.3.4} establishes \eqref{e:f-ergo-6xy}.

The proof of \eqref{e:f-ergo-6} is similar:  If in addition $\pi(V)<\infty$, then 
Theorem 14.3.5 of \cite{MT} gives, for some constant $M_f<\infty$,
\begin{equation}
\sum_{k=0}^\infty \| P^{\Delta k}(x,\varble) -\pi(\varble) \|_{f_\Delta} \le M_f(V(x) +1).
\label{e:MT14.3.5}
\end{equation}
This combined with \eqref{e:Skeletonf-bdd} completes the proof.
\end{proof}
  
\def\cprime{$'$}\def\cprime{$'$}\def\cprime{$'$}

\end{document}